\numberwithin{equation}{section}
\newtheorem{theorem}{Theorem}[section]
\newtheorem{corollary}{Corollary}[theorem]
\newtheorem{lemma}[theorem]{Lemma}
\begin{document}
\author{Alexander E. Patkowski}
\title{On certain Fourier expansions for the Riemann zeta function}

\maketitle
\begin{abstract} We build on a recent paper on Fourier expansions for the Riemann zeta function. We establish Fourier expansions for certain $L$-functions, and offer series representations involving the Whittaker function $W_{\gamma,\mu}(z)$ for the coefficients. Fourier expansions for the reciprocal of the Riemann zeta function are also stated. A new expansion for the Riemann xi function is presented in the third section by constructing an integral formula using Mellin transforms for its Fourier coefficients.
\end{abstract}

\keywords{\it Keywords: \rm Riemann zeta function; Riemann Hypothesis; Fourier series}

\subjclass{ \it 2010 Mathematics Subject Classification 11L20, 11M06.}

\section{Introduction and Main Results} 
The measure 
$$\mu(B):=\frac{1}{2\pi}\int_{B}\frac{dy}{\frac{1}{4}+y^2},$$
for each $B$ in the Borel set $\mathfrak{B},$ has been applied in the work of [7] as well as Coffey [3], providing interesting applications in analytic number theory. For the measure space $(\mathbb{R},\mathfrak{B}, \mu),$ 
\begin{equation} \left\lVert g \right\rVert_2^2:=\int_{\mathbb{R}}|g(t)|^2d\mu,\end{equation}
is the $L^2(\mu)$ norm of $f(x).$ Here (1.1) is finite, and $f(x)$ is measurable [10, pg.326, Definition 11.34]. In a recent paper by Elaissaoui and Guennoun [7], an interesting Fourier expansion was presented which states that, if $f(x)\in L^2(\mu),$ then
\begin{equation} f(x)=\sum_{n\in\mathbb{Z}}a_ne^{-2in\tan^{-1}(2x)},\end{equation}
where
\begin{equation}a_n=\frac{1}{2\pi}\int_{\mathbb{R}}f(y)e^{2in\tan^{-1}(2y)}\frac{dy}{\frac{1}{4}+y^2}.\end{equation}
By selecting $x=\frac{1}{2}\tan(\phi),$ we return to the classical Fourier expansion, since $f(\frac{1}{2}\tan(\phi))$ is periodic in $\pi.$ The main method applied in their paper to compute the constants $a_n$ is the Cauchy residue theorem. However, it is possible (as noted therein) to directly work with the integral
\begin{equation}a_n=\frac{1}{2\pi}\int_{-\pi}^{\pi}f(\frac{1}{2}\tan(\frac{\phi}{2}))e^{in\phi}d\phi.\end{equation}
Many remarkable results were extracted from the Fourier expansion (1.2)--(1.3), including criteria for the Lindel$\ddot{o}$f Hypothesis [7, Theorem 4.6]. \par Let $\rho$ denote the the nontrivial zeros of $\zeta(s)$ in the critical region $(0,1),$ and $\Re(\rho)=\alpha,$ $\Im(\rho)=\beta.$ The goal of this paper is to offer some more applications of (1.2)--(1.3), including a criteria for the Riemann hypothesis. Recall that the Riemann Hypothesis is the statement that $\alpha\notin(\frac{1}{2},1).$
\begin{theorem}\label{thm:thm1} For $\sigma>1,$ $x\in\mathbb{R},$
$$\frac{1}{\zeta(\sigma+ix)}=\frac{1}{\zeta(\sigma+\frac{1}{2})}+\sum_{n\ge1}\bar{a}_ne^{-2in\tan^{-1}(2x)},$$
where $$\bar{a}_n=\frac{1}{n!}\sum_{n> k\ge0}\binom{n}{k}\frac{(-1)^{n}(n-1)!}{(k-1)!}\lim_{s\rightarrow0}\frac{\partial^{k}}{\partial s^k}\frac{1}{\zeta(\sigma+\frac{1}{2}-s)}.$$
 Moreover, if the zeros of $\zeta(s)$ are simple, we have
 $$\frac{1}{\zeta(\sigma-ix)}=\sum_{n\in\mathbb{Z}}\hat{a}_ne^{-2in\tan^{-1}(2x)},$$
 where for $n\ge1,$
$$\hat{a}_n=\frac{1}{n!}\sum_{n\ge k\ge0}\binom{n}{k}\frac{(-1)^{n}(n-1)!}{(k-1)!}\lim_{s\rightarrow0}\frac{\partial^{k}}{\partial s^k}\frac{1}{\zeta(\sigma-\frac{1}{2}+s)}-S(n,\sigma),$$
where
$$S(n,\sigma)=\sum_{\beta: \zeta(\rho)=0}\left(\frac{\sigma-i\beta}{1-\sigma+i\beta}\right)^n\frac{1}{\zeta'(\rho)(1-\sigma+i\beta)(\sigma-i\beta)}$$
$$+\sum_{k\ge1}\left(\frac{\frac{1}{2}+\sigma+2k}{\frac{1}{2}-\sigma-2k}\right)^n\frac{1}{\zeta'(-2k)(\frac{1}{2}-\sigma-2k)(\frac{1}{2}+\sigma+2k)},$$
and $\hat{a}_n=-S(n,\sigma)$ for $n<0,$ $\hat{a}_0=1/\zeta(\sigma+\frac{1}{2}).$

\end{theorem}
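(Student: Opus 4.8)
The plan is to realize both expansions as instances of the Fourier expansion (1.2)--(1.3), with $f(x)=1/\zeta(\sigma+ix)$ and $f(x)=1/\zeta(\sigma-ix)$, and then to evaluate the coefficients (1.3) by the residue calculus, as in [7]. Under the Riemann Hypothesis $\zeta$ has no zeros with $\Re(s)>\tfrac12$, so for fixed $\sigma\in(\tfrac12,1)$ the function $x\mapsto1/\zeta(\sigma\pm ix)$ is continuous with $1/\zeta(\sigma\pm it)\ll_{\sigma}|t|^{\varepsilon}$, and since $d\mu$ decays like $dy/y^{2}$ the norm (1.1) is finite; hence (1.2)--(1.3) apply. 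Using $e^{2i\tan^{-1}(2y)}=(1+2iy)/(1-2iy)=(\tfrac12+iy)/(\tfrac12-iy)$ together with $\tfrac14+y^{2}=(\tfrac12+iy)(\tfrac12-iy)$, and substituting $s=\sigma+iy$ (respectively $s=\sigma-iy$), one converts (1.3) into a vertical contour integral,
$$a_{n}=\frac{(-1)^{n+1}}{2\pi i}\int_{\sigma-i\infty}^{\sigma+i\infty}\frac{(s-\sigma+\tfrac12)^{\,n-1}}{\zeta(s)\,(s-\sigma-\tfrac12)^{\,n+1}}\,ds,\qquad\text{resp.}\qquad a_{n}=\frac{(-1)^{n+1}}{2\pi i}\int_{\sigma-i\infty}^{\sigma+i\infty}\frac{(s-\sigma-\tfrac12)^{\,n-1}}{\zeta(s)\,(s-\sigma+\tfrac12)^{\,n+1}}\,ds.$$

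I would then shift the contour. In both integrals $1/\zeta(s)\to1$ as $\Re(s)\to+\infty$, $|\zeta(s)|\to\infty$ as $\Re(s)\to-\infty$ (functional equation and Stirling), the rational factor is $O(|s|^{-2})$, and under the Riemann Hypothesis $1/\zeta(s)=O(|s|^{\varepsilon})$ on vertical segments chosen to avoid the ordinates of the zeros, so the integrand decays on arcs of large semicircles on either side. For $1/\zeta(\sigma+ix)$ I would close to the right: the only singularity with $\Re(s)>\sigma$ is the pole of order $n+1$ at $s=\sigma+\tfrac12$ when $n\ge1$, so $a_{n}$ equals that residue up to sign; for $n<0$ that factor is regular there, no pole lies to the right, and $a_{n}=0$; for $n=0$ the residue is $1/\zeta(\sigma+\tfrac12)$. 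For $1/\zeta(\sigma-ix)$ I would close to the left, collecting the pole of order $n+1$ at $s=\sigma-\tfrac12$ (for $n\ge1$), the nontrivial zeros $\rho$ of $\zeta$ (simple poles of $1/\zeta$ with residue $1/\zeta'(\rho)$, by the simplicity hypothesis), and the trivial zeros $s=-2k$; for $n\le0$ the pole at $s=\sigma-\tfrac12$ drops out, leaving only the zero-contributions (which collect into $S(n,\sigma)$, with the sign as in the statement), and $n=0$ again gives $1/\zeta(\sigma+\tfrac12)$.

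To evaluate the residue at $s=\sigma\pm\tfrac12$, observe that after removing the pole factor one is left with $\bigl(s-(\sigma\mp\tfrac12)\bigr)^{\,n-1}/\zeta(s)$, whose numerator is a polynomial of degree $n-1$; so Leibniz's rule for the $n$-th derivative at $s=\sigma\pm\tfrac12$ terminates after at most $n$ terms, each an explicit derivative of that monomial times the $k$-th derivative of $\zeta(s)^{-1}$, and re-expressing the latter through the Taylor coefficients of $s\mapsto1/\zeta(\sigma\pm\tfrac12\mp s)$ at $s=0$ produces the displayed binomial sum (the prefactor $\tfrac1{n!}$ reflecting the pole order $n+1$). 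The residue at a simple zero $w$ of $\zeta$ is $\zeta'(w)^{-1}$ times the rational factor evaluated at $w$; substituting $\rho-\sigma-\tfrac12=-(\sigma-i\beta)$, $\rho-\sigma+\tfrac12=1-\sigma+i\beta$ under the Riemann Hypothesis, and $-2k-\sigma-\tfrac12=-(\tfrac12+\sigma+2k)$, $-2k-\sigma+\tfrac12=\tfrac12-\sigma-2k$, turns the sum over zeros into the two series defining $S(n,\sigma)$. Assembling everything gives the stated formulas.

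The hard part will be the second expansion: justifying the shift of the contour across the infinitely many poles of $1/\zeta$ calls for a sequence of zero-avoiding contours with controlled growth of $1/\zeta$ (routine under the Riemann Hypothesis), but the real difficulty is the absolute convergence of $S(n,\sigma)$. Since $|(\sigma-i\beta)/(1-\sigma+i\beta)|\to1$, convergence rests entirely on $\sum_{\rho}\bigl(|\zeta'(\rho)|\,|\rho|^{2}\bigr)^{-1}$, hence on effective lower bounds for $|\zeta'(\rho)|$; bounding this series, and checking that termwise summation survives in the limiting contour, is where the work lies. The first expansion, by contrast, requires only the $L^{2}(\mu)$-estimate above and the elementary residue at $s=\sigma+\tfrac12$.
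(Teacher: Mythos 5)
Your proposal is correct and follows essentially the same route as the paper: write the Fourier coefficient (1.3) as a vertical contour integral, apply the residue theorem (Leibniz's rule for the higher-order pole coming from $(\tfrac12+iy)/(\tfrac12-iy)$ raised to the $n$-th power, simple poles at the zeros of $\zeta$ for the second expansion), the only difference being that you parametrize by $s=\sigma+iy$ and close semicircles while the paper uses $s=\tfrac12+iy$ and shifts the vertical line after the substitution $s\mapsto 1-s$. The convergence of $S(n,\sigma)$ and the passage across infinitely many zeros, which you rightly flag as the genuinely hard point, is likewise left unjustified in the paper's own proof.
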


\begin{corollary}\label{corollary:Cor1} For $\sigma>1,$
$$\frac{1}{2\pi}\int_{\mathbb{R}}\frac{d\mu}{|\zeta(\sigma+iy)|^2}=\frac{1}{\zeta^2(\sigma+\frac{1}{2})}+\sum_{k\ge1}|\bar{a}_k|^2,$$
where the $\bar{a}_n$ are as defined in the previous theorem. Furthermore, even assuming the Riemann Hypothesis, this integral diverges for $\frac{1}{2}<\sigma<1.$
\end{corollary}

Next we consider a Fourier expansion with coefficients expressed as a series involving the Whittaker function $W_{\gamma, \mu}(z),$ which is a solution to the differential equation [8, pg.1024, eq.(9.220)]
$$\frac{d^2W}{dz^2}+\left(-\frac{1}{4}+\frac{\gamma}{z}+\frac{1-4\mu^2}{4z^2}\right)W=0.$$
This function also has the representation [8, pg.1024, eq.(9.220)]
$$W_{\gamma,\mu}(z)=\frac{\Gamma(-2\mu)}{\Gamma(\frac{1}{2}-\mu-\gamma)}M_{\gamma,\mu}(z)+\frac{\Gamma(2\mu)}{\Gamma(\frac{1}{2}+\mu-\gamma)}M_{\gamma,-\mu}(z).$$
Here the other Whittaker function $M_{\gamma,\mu}(z)$ is given by
$$M_{\gamma,\mu}(z)=z^{\mu+\frac{1}{2}}e^{-z/2}{}_1F_1(\mu-\gamma+\frac{1}{2};2\mu+1;z),$$ where $_1F_1(a;b;z)$ is the well-known confluent hypergeometric function.

\begin{theorem}\label{thm:thm2} Let $v$ be a complex number which may not be an even integer. Then for $1>\sigma>\frac{1}{2},$ we have the expansion

$$\zeta(\sigma+ix)\cos^{v}(\tan^{-1}(2x))=\frac{1}{2}\zeta(\sigma+\frac{1}{2})+\sum_{n\in\mathbb{Z}}\tilde{a}_ne^{-2in\tan^{-1}(2x)},$$
where
$\tilde{a}_n=\frac{(2\sigma^2-4\sigma+\frac{5}{2})}{2(\sigma-\frac{1}{2})^2(\frac{3}{2}-\sigma)^2}\left(\frac{\frac{3}{2}-\sigma}{\sigma-\frac{1}{2}}\right)^n$
for $n<0,$ and for $n\ge1,$
$$\tilde{a}_n=\frac{2\Gamma(v+1)}{\Gamma(\frac{v}{2}+n+1)\Gamma(\frac{v}{2}-n+1)}+\frac{\pi }{2^{v/2+1}}\sum_{k>1} k^{-\sigma}\left(\frac{\log(k)}{2}\right)^{v/2}\frac{W_{n,-\frac{v+1}{2}}(\log(k))}{\Gamma(1+\frac{v}{2}+n)}.$$

\end{theorem}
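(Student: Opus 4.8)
The plan is to apply the Elaissaoui--Guennoun expansion (1.2)--(1.3) to
$$f(x)=\zeta(\sigma+ix)\cos^{v}(\tan^{-1}(2x))=\zeta(\sigma+ix)(1+4x^{2})^{-v/2},$$
and to evaluate the coefficients by residue calculus. First I would verify $f\in L^{2}(\mu)$: for $1/2<\sigma<1$ the convexity bound $|\zeta(\sigma+it)|\ll_{\varepsilon}(1+|t|)^{(1-\sigma)/2+\varepsilon}$ gives $|f(x)|^{2}\ll(1+|x|)^{1-\sigma+\varepsilon-2\Re v}$, which against $d\mu\asymp(1+x^{2})^{-1}dx$ is integrable once $\Re v>(1-\sigma)/2-1/2$, in particular for $\Re v$ large. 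Hence (1.2)--(1.3) apply, and the statement for all non-even $v$ will follow by analytic continuation in $v$ (both sides being meromorphic in $v$ with possible poles only at even integers). Using $e^{2in\tan^{-1}(2y)}=\big(\tfrac{1+2iy}{1-2iy}\big)^{n}$ and $\tfrac{1}{1/4+y^{2}}=\tfrac{4}{(1+2iy)(1-2iy)}$ I would rewrite (1.3) as
$$\tilde a_{n}=\frac{2}{\pi}\int_{\mathbb{R}}\zeta(\sigma+iy)\,\frac{(1+2iy)^{\,n-v/2-1}}{(1-2iy)^{\,n+v/2+1}}\,dy,$$
whose integrand has branch points at $y=\pm i/2$ (from $(1+4y^{2})^{-v/2}$ together with the poles of the measure density) and a simple pole at $y=i(\sigma-1)$ from the pole of $\zeta$ at $s=1$; since $\sigma-1\in(-1/2,0)$ this pole lies on the segment between $0$ and $-i/2$.

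For $n\ge1$ I would insert $\zeta(\sigma+iy)=\sum_{k\ge1}k^{-\sigma-iy}$ and integrate term by term (legitimate after first restricting $v$ so that the interchanged series converges absolutely, or by first shifting the line of integration slightly below $\mathbb{R}$, where the series converges, picking up the residue of the $\zeta$-pole en route). The $k=1$ term, $\tfrac{2}{\pi}\int_{\mathbb{R}}(1+2iy)^{n-v/2-1}(1-2iy)^{-n-v/2-1}\,dy$, reduces (fold onto the cut at $y=i/2$, or substitute $y=\tfrac12\tan\psi$ to get $\tfrac1\pi\int_{-\pi/2}^{\pi/2}\cos^{v}\psi\cos(2n\psi)\,d\psi$) to a Beta integral $B(n-\tfrac v2,v+1)$, producing after $\Gamma(z)\Gamma(1-z)=\pi/\sin\pi z$ the ratio of Gamma functions in $\tilde a_{n}$. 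For $k\ge2$, the factor $k^{-iy}=e^{-iy\log k}$ forces closing in the lower half plane, so only the cut at $y=-i/2$ contributes; parametrising it by $y=-i/2-is$ ($s>0$) one has $1-2iy=-2s$, $1+2iy=2+2s$, $k^{-iy}=k^{-1/2}k^{-s}$, and the wrapped integral becomes a constant times
$$k^{-1/2}\int_{0}^{\infty}k^{-s}\,s^{-n-v/2-1}(1+s)^{\,n-v/2-1}\,ds.$$
Comparing with $W_{\kappa,\mu}(z)=\dfrac{e^{-z/2}z^{\mu+1/2}}{\Gamma(\mu-\kappa+\frac12)}\int_{0}^{\infty}e^{-zt}t^{\mu-\kappa-1/2}(1+t)^{\mu+\kappa-1/2}\,dt$ with $\kappa=n$, $\mu=-\tfrac{v+1}{2}$, $z=\log k$ identifies this with a multiple of $(\log k)^{v/2}W_{n,-(v+1)/2}(\log k)/\Gamma(-\tfrac v2-n)$; the reflection formula turns $\Gamma(-\tfrac v2-n)$ into $1/\Gamma(1+\tfrac v2+n)$ and pins the constant $\pi/2^{v/2+1}$, giving the Whittaker series.

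For $n<0$ I would not expand $\zeta$ but close the contour directly in the lower half plane---which is forced, since $\zeta(\sigma+iy)$ grows faster than any polynomial as $\Im y\to+\infty$ (there $\sigma+iy\to-\infty$), while tending to $1$ as $\Im y\to-\infty$. The enclosed singularities are then the pole at $y=i(\sigma-1)$ and the branch point at $y=-i/2$. The residue at $y=i(\sigma-1)$ has the shape $\text{(const)}\cdot\big(\tfrac{3-2\sigma}{2\sigma-1}\big)^{n}=\text{(const)}\cdot\big(\tfrac{3/2-\sigma}{\sigma-1/2}\big)^{n}$, i.e. exactly the geometric behaviour claimed---equivalently, under $z=e^{-2i\tan^{-1}(2x)}$ this pole sits at $z_{0}=\tfrac{2\sigma-1}{3-2\sigma}$ inside $|z|<1$. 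The contribution of the cut at $y=-i/2$, where the simple pole of $\tfrac1{1/4+y^{2}}$ meets the branch point of $(1+4y^{2})^{-v/2}$ and where $\zeta$ equals $\zeta(\sigma+\tfrac12)$, supplies the $n$-independent part together with the separated constant $\tfrac12\zeta(\sigma+\tfrac12)$; combining the two and simplifying (using $1+z_{0}=\tfrac2{3-2\sigma}$) should collapse the $v$-dependence and leave the stated $\tilde a_{n}$.

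The main obstacle, I expect, is the bookkeeping of the branch-cut integrals: getting the signs and powers of $2$ right when wrapping cuts, choosing the correct half plane for each piece, and above all showing that the separately $v$-dependent contributions of the $\zeta$-pole and of the cut at $y=-i/2$ combine into the $v$-free closed form for $n<0$ (and into the $\tfrac12\zeta(\sigma+\tfrac12)$ term). A secondary technical point is justifying the term-by-term integration for $n\ge1$ and the analytic continuation in $v$ that extends the identity from the convergence range to all non-even $v$.
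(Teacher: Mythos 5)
Your treatment of the coefficients with $n\ge1$ is essentially the paper's own proof: the paper likewise folds the Fourier integral onto $[0,\pi/2]$, inserts the Dirichlet series for $\zeta(\sigma+iy)$, and evaluates the $k=1$ term by the tabulated integral $\int_0^{\pi/2}\cos^{v}(y)\cos(2ny)\,dy$ and the $k\ge2$ terms by the tabulated integral $\int_0^{\pi/2}\cos^v(y)\cos(a\tan(y)-2ny)\,dy=\frac{\pi a^{v/2}}{2^{v/2+1}}W_{n,-(v+1)/2}(2a)/\Gamma(1+\tfrac v2+n)$ from Gradshteyn--Ryzhik; you rederive those two table entries by contour/Beta-integral manipulations rather than citing them, which is a harmless difference. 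Your $L^2(\mu)$ verification via the convexity bound plus continuation in $v$ is in fact more careful than the paper's.

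The $n<0$ part, which you yourself flag as the main obstacle, contains a genuine gap that cannot be closed in the form you propose. For non-integer $v$ the branch point of $(1+4y^2)^{-v/2-1}$ at $y=-i/2$ lies inside your lower contour, and the wrapped integral along that cut is a nontrivial function of both $n$ and $v$ (another Whittaker-type integral, by the same computation you perform for $k\ge2$); there is no mechanism for its $v$-dependence to cancel, since the $\zeta$-pole residue carries the geometric factor $\left(\frac{3/2-\sigma}{\sigma-1/2}\right)^n$ while the cut contribution does not. A decisive sanity check is $v=0$: then there is no cut at all, the only enclosed singularity is the $\zeta$-pole, and the residue gives $\tilde a_n=-\frac{1}{(\sigma-\frac12)(\frac32-\sigma)}\left(\frac{3/2-\sigma}{\sigma-1/2}\right)^n$ for $n<0$, which does not match the asserted $v$-independent constant $\frac{2\sigma^2-4\sigma+5/2}{2(\sigma-1/2)^2(3/2-\sigma)^2}$. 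That constant is in fact the residue at $s=\sigma-\frac12$ of the rational kernel $\frac{2(2s^2-2s+1)}{(2s(1-s))^2}$, i.e. of $\cos(2\tan^{-1}(2y))/(\tfrac14+y^2)$, which is what the paper's own $n<0$ computation silently substitutes for $\cos^{v}(\tan^{-1}(2y))/(\tfrac14+y^2)$. So the ``collapse of the $v$-dependence'' you hope for will not occur: the obstacle lies in the statement, not in your method. To complete a correct argument along your lines you would have to retain the $v$- and $n$-dependent cut contribution explicitly in $\tilde a_n$ for $n<0$ (and likewise re-examine the provenance of the separated term $\tfrac12\zeta(\sigma+\tfrac12)$), rather than aim at the stated closed form.
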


\section{Proof of Main Theorems}

In our proof of Corollary 1.1.1, we will require a well-known result [13, pg.331, Theorem 11.45] on functions in $L^2(\mathbb{\mu}).$

\begin{lemma} Suppose that $f(x)=\sum_{k\in\mathbb{Z}}a_k\kappa_k,$ where $\{\kappa_n\}$ is a complete orthonormal set and $f(x)\in L^2(\mu),$ 
then 
$$\int_{X}|f(x)|^2d\mu=\sum_{k\in\mathbb{Z}}|a_k|^2.$$

\end{lemma}

\begin{proof}[Proof of Theorem~\ref{thm:thm1}] First we rewrite the integral for $\sigma>1$ as

\begin{equation}\bar{a}_n=\frac{1}{2\pi}\int_{\mathbb{R}}e^{2in\tan^{-1}(2y)}\frac{dy}{\zeta(\sigma+iy)(\frac{1}{4}+y^2)}dy=\frac{1}{2\pi i}\int_{(\frac{1}{2})}\left(\frac{s}{1-s}\right)^n\frac{ds}{\zeta(\sigma-\frac{1}{2}+s)s(1-s)}. \end{equation}
We replace $s$ by $1-s$ and apply the residue theorem by moving the line of integration to the left. By the Leibniz rule, we compute the residue at the pole $s=0$ of order $n+1,$ $n\ge0,$
as \begin{equation}\begin{aligned}
&\frac{1}{n!}\lim_{s\rightarrow0}\frac{d^n}{ds^n}s^{n+1}\left(\left(\frac{1-s}{s}\right)^n\frac{1}{\zeta(\sigma+\frac{1}{2}-s)s(1-s)}\right) \\
&=\frac{1}{n!}\lim_{s\rightarrow0}\frac{d^n}{ds^n}\frac{(1-s)^{n-1}}{\zeta(\sigma+\frac{1}{2}-s)}  \\
&=\frac{1}{n!}\sum_{n\ge k\ge0}\binom{n}{k}\frac{(-1)^n(n-1)!}{(k-1)! }\lim_{s\rightarrow0}\frac{\partial^{k}}{\partial s^k}\frac{1}{\zeta(\sigma+\frac{1}{2}-s)}
 \end{aligned}\end{equation}
The residue at $s=0$ if $n=0$ is $-1/\zeta(\sigma+\frac{1}{2}).$ There are no additional poles when $n<0.$ Since the sum in (2.2) is zero for $k=n$ it reduces to the one stated in the theorem. \par Next we consider the second statement. The integrand in
\begin{equation}\frac{1}{2\pi i}\int_{(\frac{1}{2})}\left(\frac{1-s}{s}\right)^n\frac{ds}{\zeta(\sigma-\frac{1}{2}+s)s(1-s)} \end{equation}
 has simple poles at $s=1-\sigma+i\beta,$ where $\Im(\rho)=\beta.$ The integrand in (2.3) also has simple poles at $s=\frac{1}{2}-\sigma-2k,$ and a pole of order $n+1,$ $n>0,$ at $s=0.$ We compute,
$$\begin{aligned} &\frac{1}{n!}\lim_{s\rightarrow0}\frac{d^n}{ds^n}s^{n+1}\left(\left(\frac{1-s}{s}\right)^n\frac{1}{\zeta(\sigma-\frac{1}{2}+s)s(1-s)}\right) \\
&=\frac{1}{n!}\lim_{s\rightarrow0}\frac{d^n}{ds^n}\frac{(1-s)^{n-1}}{\zeta(\sigma-\frac{1}{2}+s)}  \\
&=\frac{1}{n!}\sum_{n\ge k\ge0}\binom{n}{k}\frac{(-1)^n(n-1)!}{(k-1)!}\lim_{s\rightarrow0}\frac{\partial^{k}}{\partial s^k}\frac{1}{\zeta(\sigma-\frac{1}{2}+s)}. \end{aligned}$$
The residue at the pole $s=1-\sigma+i\beta,$ is 
$$\sum_{\beta: \zeta(\rho)=0}\left(\frac{\sigma-i\beta}{1-\sigma+i\beta}\right)^n\frac{1}{\zeta'(\rho)(1-\sigma+i\beta)(\sigma-i\beta)},$$
and at the pole $s=\frac{1}{2}-\sigma-2k$ is
$$\sum_{k\ge1}\left(\frac{\frac{1}{2}+\sigma+2k}{\frac{1}{2}-\sigma-2k}\right)^n\frac{1}{\zeta'(-2k)(\frac{1}{2}-\sigma-2k)(\frac{1}{2}+\sigma+2k)},$$
The residue at the pole $n=0,$ $s=0,$ is $-1/\zeta(\sigma-\frac{1}{2}).$
\end{proof}
\begin{proof}[Proof of Corollary~\ref{corollary:Cor1}] This result readily follows from application of Theorem 1.1 to Lemma 2.1 with $X=\mathbb{R}.$ In the first part of the theorem, note from [14, pg.191, Theorem 8.7], if $\sigma>1,$
$$\left|\frac{1}{\zeta(s)}\right|\le\frac{\zeta(\sigma)}{\zeta(2\sigma)}.$$ Hence
$$\frac{1}{\left|\zeta(s)\right|^2(t^2+\frac{1}{4})}=O\left(\frac{1}{|t|^2}\right),$$ as $t\rightarrow\infty,$ and $1/\zeta(\sigma+it)\in L^2(\mu),$ for $\sigma>1.$ The convergence of the series $\sum_{k}|\bar{a}_n|^2$ follows immediately from [10, pg.580, Lemma 12.6]. In the second part of the theorem, note from [14, pg.377] or [14, pg.372]
$$\frac{1}{\zeta(s)}=O\left(\frac{|s|}{\sigma-\frac{1}{2}}\right).$$ Hence
$$\frac{1}{\left|\zeta(s)\right|^2(t^2+\frac{1}{4})}=O\left(\frac{|s|^2}{|t|^2}\right)=O(1),$$ as $t\rightarrow\infty,$ and $1/\zeta(\sigma+it)\notin L^2(\mu),$ for $\frac{1}{2}<\sigma<1.$\end{proof}

\begin{proof}[Proof of Theorem~\ref{thm:thm2}] It is clear that
$$\cos^{v}(2\tan^{-1}(2y))=\left(\frac{1-4y^2}{1+4y^2}\right)^v=O(1).$$ Comparing with [7, Theorem 1.2] we see our function belongs to $L^2(\mu).$
We compute that
$$ \begin{aligned}
&\tilde{a}_n=\frac{1}{2\pi}\int_{-\pi}^{\pi}f(\frac{1}{2}\tan(\frac{\phi}{2}))e^{in\phi}d\phi \\
&=\frac{1}{2\pi}\int_{-\pi}^{\pi}\zeta(\sigma+\frac{i}{2}\tan(\frac{\phi}{2}))\cos^v(\frac{\phi}{2}) e^{in\phi}d\phi \\
&=\frac{1}{2\pi}\left(\int_{0}^{\pi}\zeta(\sigma+\frac{i}{2}\tan(\frac{\phi}{2})) \cos^v(\frac{\phi}{2}) e^{in\phi}d\phi +\int_{-\pi}^{0}\zeta(\sigma+\frac{i}{2}\tan(\frac{\phi}{2}))\cos^v(\frac{\phi}{2})e^{in\phi}d\phi\right)\\
&=\frac{1}{2\pi}\left(\int_{0}^{\pi}\zeta(\sigma+\frac{i}{2}\tan(\frac{\phi}{2})) \cos^v(\frac{\phi}{2}) e^{in\phi}d\phi +\int_{0}^{\pi}\zeta(\sigma-\frac{i}{2}\tan(\frac{\phi}{2}))\cos^v(\frac{\phi}{2})e^{-in\phi}d\phi\right)\\
&=\frac{1}{2\pi}\left(\int_{0}^{\pi}\zeta(\sigma+\frac{i}{2}\tan(\frac{\phi}{2})) \cos^v(\frac{\phi}{2}) e^{in\phi}d\phi +\int_{0}^{\pi}\zeta(\sigma-\frac{i}{2}\tan(\frac{\phi}{2}))\cos^v(\frac{\phi}{2})e^{-in\phi}d\phi\right)\\
&=\frac{1}{\pi}\int_{0}^{\pi} \cos^v(\frac{\phi}{2}) \sum_{k\ge1}k^{-\sigma}\cos\left(\frac{1}{2}\tan(\frac{\phi}{2})\log(k)-n\phi\right)d\phi\\
&=\frac{1}{\pi}\int_{0}^{\pi} \cos^v(\frac{\phi}{2}) \cos\left(n\phi\right) d\phi +\frac{1}{\pi}\int_{0}^{\pi} \cos^v(\frac{\phi}{2}) \sum_{k>1}k^{-\sigma}\cos\left(\frac{1}{2}\tan(\frac{\phi}{2})\log(k)-n\phi\right) d\phi \\
&=\frac{2}{\pi}\int_{0}^{\pi/2} \cos^v(\phi) \cos\left(n2\phi\right) d\phi +\frac{2}{\pi}\int_{0}^{\pi/2} \cos^v(\phi) \sum_{k>1}k^{-\sigma}\cos\left(\frac{1}{2}\tan(\phi)\log(k)-n2\phi\right) d\phi. \\
\end{aligned}$$
Now by [8, pg.397] for $\Re(v)>0,$ we have
\begin{equation}\int_{0}^{\pi/2}\cos^{v-1}(y)\cos(by)dy=\frac{\pi\Gamma(v)}{\Gamma(\frac{v+b+1}{2})\Gamma(\frac{v-b+1}{2})}.\end{equation}
Let $\mathbb{Z}^{-}$ denote the set of negative integers. Then, by [8, pg.423] with $a>0,$ $\Re(v)>-1,$ $\frac{v+\gamma}{2}\neq\mathbb{Z}^{-},$
\begin{equation}\int_{0}^{\pi/2}\cos^v(y)\cos(a\tan(y)-\gamma y)dy=\frac{\pi a^{v/2}}{2^{v/2+1}}\frac{W_{\gamma/2,-\frac{v+1}{2}}(2a)}{\Gamma(1+\frac{v+\gamma}{2})}.\end{equation}
Hence, if we put $b=2n$ and replace $v$ by $v+1$ in (2.4), and select $a=\frac{1}{2}\log(k)$ and $\gamma=2n$ in (2.5), we find 
\begin{equation}\tilde{a}_n=\frac{2\Gamma(v+1)}{\Gamma(\frac{v}{2}+n+1)\Gamma(\frac{v}{2}-n+1)}+\frac{\pi }{2^{v/2+1}}\sum_{k>1} k^{-\sigma}\left(\frac{\log(k)}{2}\right)^{v/2}\frac{W_{n,-\frac{v+1}{2}}(\log(k))}{\Gamma(1+\frac{v}{2}+n)}.\end{equation} Hence $v$ cannot be a negative even integer.

The interchange of the series and integral is justified by absolute convergence for $\sigma>\frac{1}{2}.$ To see this, note that [8, pg.1026, eq.(9.227), eq.(9.229)]
$$W_{\gamma,\mu}(z)\sim e^{-z/2}z^{\gamma},$$ as $|z|\rightarrow\infty,$ and
$$W_{\gamma,\mu}(z)\sim (\frac{4z}{\gamma})^{1/4}e^{-\gamma+\gamma\log(\gamma)}\sin(2\sqrt{\gamma z}-\gamma\pi-\frac{\pi}{4}),$$ as $|\gamma|\rightarrow\infty.$
Using (2.6) as coefficients for $n<0$ is inadmissible, due to the resulting sum over $n$ being divergent. On the other hand, it can be seen that
$$\begin{aligned} &\tilde{a}_n=\frac{1}{2\pi}\int_{\mathbb{R}}e^{2in\tan^{-1}(2y)}\frac{\zeta(\sigma+iy)\cos^{v}(\tan^{-1}(2y))dy}{(\frac{1}{4}+y^2)}\\
&=\frac{1}{2\pi i}\int_{(\frac{1}{2})}\frac{\zeta(\sigma-\frac{1}{2}+s)2(2s^2-2s+1)}{(2s(1-s))^2}\left(\frac{s}{1-s}\right)^nds\\
&=\frac{1}{2\pi i}\int_{(\frac{1}{2})}\frac{\zeta(\sigma+\frac{1}{2}-s)2(2s^2-2s+1)}{(2s(1-s))^2}\left(\frac{1-s}{s}\right)^nds  . \end{aligned}$$
We will only use the residues at the pole $s=0$ when $n<0$ and $s=\sigma-\frac{1}{2},$ and outline the details to obtain an alternative expression for the $\tilde{a}_n$ for $n\ge0.$ The integrand has a simple pole at $s=\sigma-\frac{1}{2},$ a pole of order $n+2$ at $s=0,$ and when $n<0$ there is a simple pole when $n=-1,$ at $s=0.$ The residue at the pole $s=0$ for $n\ge0$ is computed as
\begin{equation}\begin{aligned}
&\frac{1}{n!}\lim_{s\rightarrow0}\frac{d^{n+1}}{ds^{n+1}}s^{n+2}\left(\frac{\zeta(\sigma+\frac{1}{2}-s)2(2s^2-2s+1)}{(2s(1-s))^2}\left(\frac{1-s}{s}\right)^n\right) \\
&=\frac{1}{n!2}\lim_{s\rightarrow0}\frac{d^{n+1}}{ds^{n+1}}\left(\zeta(\sigma+\frac{1}{2}-s)(2s^2-2s+1)(1-s)^{n-2}\right). \end{aligned}\end{equation}
And because the resulting sum is a bit cumbersome, we omit this form in our stated theorem. The residue at the simple pole when $n=-1,$ at $s=0$ is $\frac{1}{2}\zeta(\sigma+\frac{1}{2}).$ Collecting our observations tells us that if $n<0,$
$$\tilde{a}_n=\frac{(2\sigma^2-4\sigma+\frac{5}{2})}{2(\sigma-\frac{1}{2})^2(\frac{3}{2}-\sigma)^2}\left(\frac{\frac{3}{2}-\sigma}{\sigma-\frac{1}{2}}\right)^n.$$

\end{proof}

\section{Riemann xi function}
The Riemann xi function is given by $\xi(s):=\frac{1}{2}s(s-1)\pi^{-\frac{s}{2}}\Gamma(\frac{s}{2})\zeta(s),$ and $\Xi(y)=\xi(\frac{1}{2}+iy).$ In many recent works [4, 5], Riemann xi function integrals have been shown to have interesting evaluations. (See also [11] for an interesting expansion for the Riemann xi function.) The classical application is in the proof of Hardy's theorem that there are infinitely many non-trivial zeros on the line $\Re(s)=\frac{1}{2}.$ \par We will need to utilize Mellin transforms to prove our theorems. By Parseval's formula [12, pg.83, eq.(3.1.11)], we have
\begin{equation}\int_{0}^{\infty}f(y)g(y)dy=\frac{1}{2\pi i}\int_{(r)}\mathfrak{M}(f(y))(s)\mathfrak{M}(g(y))(1-s)ds,\end{equation}
provided that $r$ is chosen so that the integrand is analytic, and 
$$\int_{0}^{\infty}y^{s-1}f(y)dy=:\mathfrak{M}(f(y))(s).$$
From [12, pg.95, eq.(3.3.27)] with $n\ge0,$ $x>1,$ $c>0,$ we have
\begin{equation}\frac{1}{2\pi i}\int_{(c)}\frac{x^s}{s^{n+1}}ds=\frac{(\log(x))^n}{n!}.\end{equation} Now it is known [6, pg.207--208] that for any $\Re(s)=u\in \mathbb{R},$ 
\begin{equation}\Theta(y)=\frac{1}{2\pi i}\int_{(u)}\xi(s)y^{-s}ds,\end{equation}
where 
\begin{equation}\Theta(y):=2y^2\sum_{n\ge1}(2\pi^2 n^4y^2-3\pi n^2)e^{-\pi n^2 y^2},\end{equation} 
for $y>0.$ Define the operator $\mathfrak{D}_{n, y}(f(y)):=\underbrace{y\frac{\partial }{\partial y}\dots y\frac{\partial }{\partial y}}_{n}(f(y)).$ 
\begin{theorem} For real numbers $x\in\mathbb{R},$
$$\Xi(x)=\frac{1}{(\frac{1}{4}+x^2)}\sum_{n\in\mathbb{Z}}\ddot{a}_ne^{-2in\tan^{-1}(2x)},$$
where $\ddot{a}_0=0,$ and for $n\ge1,$ 
$$\ddot{a}_n=\frac{(-1)^{n}}{(n-1)!}\int_{0}^{1}\log^{n-1}(y)\mathfrak{D}_{n, y}(\Theta(y))dy,$$ and
$$\ddot{a}_{-n}=-\frac{(-1)^n}{(n-1)!}\sum_{n-1\ge k \ge0}\binom{n-1}{k}\frac{n!}{(k+1)!}\xi^{(k)}(0).$$
\end{theorem}

\begin{proof}
Applying the operator $\mathfrak{D}_{n, y}$ to (3.3)--(3.4), then applying the resulting Mellin transform with (3.2) to (3.1), we have for $c<1,$ $n\ge1,$
\begin{equation}\frac{(-1)^{n}}{(n-1)!}\int_{0}^{1}\log^{n-1}(y)\mathfrak{D}_{n, y}(\Theta(y))dy=\frac{1}{2\pi i}\int_{(c)}\left(\frac{s}{1-s}\right)^n\xi(s)ds.\end{equation}
On the other hand,
\begin{equation}\begin{aligned} &\ddot{a}_n=\frac{1}{2\pi}\int_{\mathbb{R}}e^{2in\tan^{-1}(2y)}\frac{(\frac{1}{4}+y^2)\Xi(y)dy}{(\frac{1}{4}+y^2)}dy=\frac{1}{2\pi i}\int_{(\frac{1}{2})}\left(\frac{s}{1-s}\right)^n\xi(s)ds \\
&= \frac{1}{2\pi i}\int_{(\frac{1}{2})}\left(\frac{s}{1-s}\right)^n\pi^{-s/2}\frac{s}{2}(s-1)\zeta(s)\Gamma(\frac{s}{2})ds .\end{aligned}\end{equation}
This gives the coefficients for $n\ge1.$ If we place $n$ by $-n$ in the integrand of (3.6), we see that there is a pole of order $n,$ $n>0,$ at $s=0.$ These residues are computed in the same way as before, and so we leave the details to the reader. Hence, for $n>0,$ $-2<r'<0,$
\begin{equation}\begin{aligned}&\ddot{a}_{-n}=\frac{1}{2\pi i}\int_{(\frac{1}{2})}\left(\frac{1-s}{s}\right)^n\xi(s)ds\\
&=\frac{(-1)^n}{(n-1)!}\sum_{n-1\ge k \ge0}\binom{n-1}{k}\frac{n!}{(k+1)!}\xi^{(k)}(0)+\frac{1}{2\pi i}\int_{(r')}\left(\frac{1-s}{s}\right)^n\xi(s)ds\\
&=\frac{(-1)^n}{(n-1)!}\sum_{n-1\ge k \ge0}\binom{n-1}{k}\frac{n!}{(k+1)!}\xi^{(k)}(0) .\end{aligned} \end{equation}
In the third line we implemented the fact that the remaining residue from the poles of $\Gamma(\frac{s}{2})$ at negative even integers is zero due to the trivial zeros of $\zeta(s).$\end{proof}
Now according to Coffey [1, pg.527], $\xi^{(n)}(0)=(-1)^n\xi^{(n)}(1),$ which may be used to recast Theorem 3.1 in a slightly different form. The integral formulae obtained in [2, pg.1152, eq.(28)] (and another form in [9, pg.11106, eq.(12)]) bear some resemblance to the integral contained in (3.5). It would be interesting to obtain a relationship to the coefficients $\ddot{a}_n.$ Next we give a series evaluation for a Riemann xi function integral.

\begin{corollary} If the coefficients $\ddot{a}_n$ are as defined in Theorem 3.1., then
$$\int_{\mathbb{R}}(\frac{1}{4}+y^2)^2\Xi^2(y)d\mu=\sum_{n\in\mathbb{Z}}|\ddot{a}_n|^2.$$
\end{corollary}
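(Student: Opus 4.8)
The plan is to read the asserted identity as the Parseval relation of Lemma 2.2 applied to the single function $f(y):=(\tfrac14+y^2)\Xi(y)$, expanded with respect to the orthonormal system $\kappa_n(y)=e^{-2in\tan^{-1}(2y)}$, $n\in\mathbb{Z}$, on the measure space $(\mathbb{R},\mathfrak{B},\mu)$. By Theorem 3.1 we have $(\tfrac14+y^2)\Xi(y)=\sum_{n\in\mathbb{Z}}\ddot a_n\kappa_n(y)$, so $f$ has precisely the Fourier expansion (1.2) with coefficients $\ddot a_n$, and $|f(y)|^2=(\tfrac14+y^2)^2\Xi^2(y)$ is exactly the integrand appearing on the left-hand side of the Corollary.

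First I would verify that $f\in L^2(\mu)$. Since $d\mu=\frac{1}{2\pi}\frac{dy}{\frac14+y^2}$, one has $\int_{\mathbb{R}}|f|^2\,d\mu=\frac{1}{2\pi}\int_{\mathbb{R}}(\tfrac14+y^2)\Xi^2(y)\,dy$, and this is finite because $\Xi(y)=\xi(\tfrac12+iy)$ decays exponentially on the real line (the $\Gamma$-factor in $\xi(s)=\tfrac12 s(s-1)\pi^{-s/2}\Gamma(\tfrac s2)\zeta(s)$ forces decay of order $e^{-\pi|y|/4}$ up to polynomial factors, which dominates the polynomial growth of $s(s-1)\zeta(s)$ on the critical line). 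In particular $f$ lies in every $L^p(\mu)$, so membership in $L^2(\mu)$ is not delicate, and by [10, pg.580, Lemma 12.6] the sequence $(\ddot a_n)$ is automatically square-summable.

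Next I would confirm that the $\ddot a_n$ really are the Fourier coefficients of $f$ against $\{\kappa_n\}$. By (1.3) the $n$-th coefficient of $f$ is $\frac{1}{2\pi}\int_{\mathbb{R}}f(y)e^{2in\tan^{-1}(2y)}\frac{dy}{\frac14+y^2}=\frac{1}{2\pi}\int_{\mathbb{R}}\Xi(y)e^{2in\tan^{-1}(2y)}\,dy$, which is exactly the expression evaluated in (3.6). Because $\{\kappa_n\}_{n\in\mathbb{Z}}$ is a complete orthonormal set in $L^2(\mu)$ — this is the substance of the Elaissaoui–Guennoun expansion (1.2)--(1.3) (equivalently, after the substitution $y=\tfrac12\tan\theta$ it becomes the classical trigonometric system on $(-\tfrac\pi2,\tfrac\pi2)$) — the series $\sum_n\ddot a_n\kappa_n$ converges to $f$ in $L^2(\mu)$, and Lemma 2.2 yields $\int_{\mathbb{R}}|f|^2\,d\mu=\sum_{n\in\mathbb{Z}}|\ddot a_n|^2$. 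Rewriting $|f|^2=(\tfrac14+y^2)^2\Xi^2(y)$ gives the stated formula.

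This is a short corollary rather than a deep result, so I do not expect a genuine obstacle; the only point that needs to be stated carefully is the square-integrability of $f$ against $\mu$, i.e. the integrability of $(\tfrac14+y^2)\Xi^2(y)$ over $\mathbb{R}$, which as noted is immediate from the exponential decay of $\Xi$. It is worth remarking, in contrast to Theorem 1.1 and Corollary 1.1.1, that no unproved hypothesis (such as the Riemann Hypothesis) enters here, since $\Xi$ is entire and the expansion of Theorem 3.1 is unconditional.
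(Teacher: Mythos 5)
Your proposal is correct and follows exactly the paper's route: the corollary is just Lemma 2.2 (Parseval) applied to $f(y)=(\tfrac14+y^2)\Xi(y)$, whose Fourier coefficients are the $\ddot a_n$ of Theorem 3.1. Your additional verification that $f\in L^2(\mu)$ via the exponential decay of $\Xi$ is a welcome detail the paper leaves implicit.
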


\begin{proof} This is an application of Theorem 3.1 to Lemma 2.1 with $X=\mathbb{R}.$ \end{proof}

\section{On the partial Fourier series}

Here we make note of some interesting consequences of our computations related to the partial sums of our Fourier series. First, we recall [10, pg.69] that 

\begin{equation}\sum_{n=-N}^{N}a_ne^{inx}=\frac{1}{2\pi}\int_{-\pi}^{\pi}f(x-y)D_{N}(y)dy,\end{equation}
where $$D_{N}(x)=\frac{\sin((N+\frac{1}{2})x)}{\sin(\frac{x}{2})}.$$ Now making the change of variable $y=2\tan^{-1}(2y),$ we find (4.1) is equal to
$$\frac{1}{2\pi}\int_{\mathbb{R}}f(x-2\tan^{-1}(2y))\frac{D_{N}(2\tan^{-1}(2y))}{\frac{1}{4}+y^2}dy.$$
Recall [10, pg.71] that $K_{N}(x)$ is the Fej\'er kernel if
$$K_{N}(x)=\frac{1}{N+1}\sum_{n=0}^{N}D_{n}(x).$$

\begin{theorem} Let $K_{N}(x)$ denote the Fej\'er kernel. Then, assuming the Riemann hypothesis,
$$\lim_{N\rightarrow\infty}\frac{1}{2\pi}\int_{\mathbb{R}}\frac{K_{N}(x_0-2\tan^{-1}(2y))}{\zeta(\sigma+iy)(\frac{1}{4}+y^2)}dy=\frac{1}{\zeta(\sigma+\frac{i}{2}\tan(\frac{x_0}{2}))},$$
for $x_0\in(-\pi,\pi),$ $\frac{1}{2}<\sigma<1.$
\end{theorem}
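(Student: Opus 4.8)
The plan is to recognize the integral on the left, after the change of variables used in Section~4, as the $N$th Ces\`aro (Fej\'er) mean at the point $x_0$ of the $2\pi$-periodic function $g(\phi):=1/\zeta\!\left(\sigma+\tfrac{i}{2}\tan(\tfrac{\phi}{2})\right)$, and then to invoke the classical pointwise Fej\'er theorem.

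First I would set $f(x)=1/\zeta(\sigma+ix)$. As in the setting of Theorem~\ref{thm:thm1}, under the Riemann Hypothesis the vertical line $\Re(s)=\sigma$ with $\tfrac12<\sigma<1$ is free of zeros of $\zeta$ and avoids the pole $s=1$, so $f$ is continuous on $\mathbb{R}$ and $f\in L^2(\mu)$ (the finiteness of $\lVert f\rVert_2^2$ being exactly what underlies Corollary~\ref{corollary:Cor1}). Since $\mu$ is a finite measure, $f\in L^2(\mu)\subset L^1(\mu)$; equivalently, writing $g(\phi)=f(\tfrac12\tan(\tfrac{\phi}{2}))$, the $2\pi$-periodic function $g$ lies in $L^1([-\pi,\pi])$, hence defines an element of $L^1(\mathbb{T})$ with ordinary Fourier coefficients $a_n$ given by (1.4). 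Only the membership $g\in L^1(\mathbb{T})$ is used below.

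Next I would carry out the substitution $\phi=2\tan^{-1}(2y)$, i.e. $y=\tfrac12\tan(\tfrac{\phi}{2})$, exactly as in Section~4: then $\tfrac14+y^2=\tfrac14\sec^2(\tfrac{\phi}{2})$, $dy=\tfrac14\sec^2(\tfrac{\phi}{2})\,d\phi$, so $\frac{dy}{\tfrac14+y^2}=d\phi$, and $y$ ranging over $\mathbb{R}$ corresponds to $\phi$ ranging over $(-\pi,\pi)$. The left-hand side of the theorem thereby becomes
$$\frac{1}{2\pi}\int_{-\pi}^{\pi}K_{N}(x_0-\phi)\,g(\phi)\,d\phi=(K_{N}*g)(x_0),$$
which is the $N$th Fej\'er mean $\sigma_{N}(g)(x_0)$, because $K_{N}$ is the positive, even, $2\pi$-periodic kernel with $\frac{1}{2\pi}\int_{-\pi}^{\pi}K_{N}=1$ recalled from [10, pg.71]. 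Finally I would apply the classical pointwise Fej\'er theorem: for $g\in L^1(\mathbb{T})$ one has $\sigma_{N}(g)(x_0)\to g(x_0)$ at every point $x_0$ where $g$ is continuous. For $x_0\in(-\pi,\pi)$ the value $\tan(\tfrac{x_0}{2})$ is finite, $\sigma+\tfrac{i}{2}\tan(\tfrac{x_0}{2})\neq1$ since $\sigma<1$, and $\zeta$ does not vanish there (RH rules out zeros with real part in $(\tfrac12,1)$, and there are none with real part $\geq1$); hence $g$ is continuous at $x_0$ with $g(x_0)=1/\zeta(\sigma+\tfrac{i}{2}\tan(\tfrac{x_0}{2}))$, which is the asserted limit.

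I expect the only delicate point — more a matter of bookkeeping than a genuine obstacle — to be verifying that the hypotheses of the pointwise Fej\'er theorem survive the change of variables: one must check that $g$ really lies in $L^1$ of the circle in spite of the oscillatory, non-convergent behaviour of $1/\zeta(\sigma+it)$ as $t\to\pm\infty$ (that is, near $\phi=\pm\pi$). This is precisely why the argument is routed through $f\in L^2(\mu)$ and the finiteness of $\mu$: the isolated endpoint $\phi\equiv\pm\pi$ is a $\mu$-null point and contributes nothing to the $L^1$ norm or to the pointwise limit at the interior point $x_0$. One should also observe that no absolute convergence of the Fourier series of $g$ is required for this statement — only membership in $L^1(\mathbb{T})$ together with continuity at the single point $x_0$ — so Lemma~2.1 does not enter here; the Riemann Hypothesis is used solely to guarantee $\zeta(\sigma+iy)\neq0$ on $\Re(s)=\sigma$.
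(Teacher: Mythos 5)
Your proposal is correct and follows essentially the same route as the paper: the substitution $\phi=2\tan^{-1}(2y)$ identifies the integral with the Fej\'er mean of $g(\phi)=1/\zeta(\sigma+\tfrac{i}{2}\tan(\tfrac{\phi}{2}))$, and the pointwise Fej\'er theorem is applied at the continuity point $x_0$, with RH guaranteeing $\zeta\neq0$ on the line $\Re(s)=\sigma$. Your extra care in verifying $g\in L^1(\mathbb{T})$ (needed because $g$ does not extend continuously to $\phi=\pm\pi$, so the $L^1$ version of Fej\'er's theorem is the one actually required) is a detail the paper's proof passes over silently.
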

\begin{proof} Notice that $1/\zeta(\sigma+\frac{i}{2}\tan(\frac{y}{2}))$ is continuous for $y\in(-\pi,\pi)$ if there are no singularities for $\frac{1}{2}<\sigma<1.$ Hence, we may apply [10, pg.29, Theorem 1.26] to find $1/\zeta(\sigma+\frac{i}{2}\tan(\frac{y}{2}))$ would then be Riemann integrable on $(-\pi,\pi)$ if there are no singularities for $\frac{1}{2}<\sigma<1.$ It is also periodic in $\pi.$ Applying Fej\'er's theorem [10, pg.73, Theorem 1.59] with $f(y)=1/\zeta(\sigma+\frac{i}{2}\tan(\frac{y}{2}))$ implies the result.
\end{proof}
Note that if $1/\zeta(\sigma+\frac{i}{2}\tan(\frac{y}{2}))$ has even finitely many points of discountinuity for $\frac{1}{2}<\sigma<1,$ we would not be able to apply Fej\'er's theorem. This is because the function is unbounded by Montgomery's omega result [14, pg.209], and therefore not Riemann integrable by [10, pg.31, Proposition 1.29].
\section{Concluding remarks}
The Fourier series for the Riemann zeta function contained herein, just like those in [7], are pointwise convergent. Seeing as how there exists a Fourier series for $\zeta(\sigma+it)$ in the region $\frac{1}{2}<\sigma<1,$ that is pointwise convergent, it would be interesting if one existed that were absolutely convergent. Wiener's result [15, pg.14, Lemma IIe] says the following:

\begin{lemma}(Wiener [15]) Suppose $f(x)$ has an absolutely convergent Fourier series and $f(x)\neq0$ for all $x\in \mathbb{R}.$ Then its reciprocal $1/f(x)$ also has an absolutely convergent Fourier series.
\end{lemma}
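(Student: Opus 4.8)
The plan is to recognize the statement as the classical assertion that the \emph{Wiener algebra}
$$A(\mathbb{T}) := \Bigl\{\, f(x) = \sum_{n\in\mathbb{Z}} c_n e^{inx} \ : \ \lVert f\rVert_A := \sum_{n\in\mathbb{Z}} |c_n| < \infty \,\Bigr\}$$
is \emph{inverse-closed}, i.e. a non-vanishing element of $A(\mathbb{T})$ has reciprocal again in $A(\mathbb{T})$. First I would verify that $A(\mathbb{T})$, with pointwise multiplication and the norm $\lVert\cdot\rVert_A$, is a commutative Banach algebra with identity; the only point needing a word is submultiplicativity $\lVert fg\rVert_A \le \lVert f\rVert_A\,\lVert g\rVert_A$, which follows from the convolution formula for the Fourier coefficients of $fg$ together with the $\ell^1$-estimate for convolutions. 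The absolute convergence hypothesis ensures each such $f$ is a genuine continuous function on the circle, so the hypothesis $f(x)\neq 0$ for all $x\in\mathbb{R}$ makes sense.

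The heart of the argument is Gelfand theory: in a commutative unital Banach algebra an element is invertible if and only if no maximal ideal contains it, equivalently if and only if $\chi(f)\neq 0$ for every character (multiplicative linear functional) $\chi$. So the key step is to identify the character space of $A(\mathbb{T})$ with the circle itself, that is, to show every character has the form $\chi_{x_0}(f) = f(x_0)$ for some $x_0$. Each point evaluation is plainly a character. Conversely, given a character $\chi$, put $w := \chi(e^{ix})$; since characters on a unital Banach algebra have norm $\le 1$ and $\lVert e^{\pm ix}\rVert_A = 1$, we get $|w|\le 1$ and $|w^{-1}| = |\chi(e^{-ix})| \le 1$, hence $w = e^{ix_0}$ for some real $x_0$. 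Then $\chi$ and $\chi_{x_0}$ agree on all trigonometric polynomials, and since both are continuous on $A(\mathbb{T})$ and the polynomials are dense in the $\lVert\cdot\rVert_A$-norm, $\chi = \chi_{x_0}$. With this in hand the proof closes at once: if $f\in A(\mathbb{T})$ never vanishes, then $\chi(f)\neq 0$ for every character, so $f$ is invertible in $A(\mathbb{T})$, and its inverse, being an element of $A(\mathbb{T})$, is exactly a function $1/f$ with an absolutely convergent Fourier series.

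I expect the identification of the character space to be the main obstacle — in particular the automatic continuity of characters and the density argument must lean on the Banach-algebra structure set up in the first step, and one must resist the temptation to evaluate $\chi$ ``pointwise'' before knowing it is an evaluation. As an alternative that sidesteps Gelfand theory, one can reproduce Wiener's original localization argument: cover the circle by finitely many short arcs, on each arc write $f = f(x_0)\bigl(1 - g\bigr)$ with $g\in A(\mathbb{T})$ vanishing at $x_0$, choose a cutoff $\psi\in A(\mathbb{T})$ equal to $1$ near $x_0$ with $\lVert \psi g\rVert_A < 1$ (this smallness, obtained by approximating $g$ by a trigonometric polynomial and controlling the tail of its coefficients, is the crux of that route), expand the local inverse $\psi\sum_{k\ge 0} g^k$ inside $A(\mathbb{T})$, and finally patch these local pieces together using a finite partition of unity whose members lie in $A(\mathbb{T})$.
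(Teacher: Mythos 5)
Your argument is correct, but note that the paper does not prove this lemma at all: it is quoted verbatim from Wiener's \emph{Tauberian theorems} (reference [15], Lemma IIe) and used as a black box in the proof of Theorem 1.1. So there is no ``paper proof'' to compare against; what you have written is a genuine proof of the cited result. The route you take is the standard Gelfand-theoretic one: $A(\mathbb{T})$ is a commutative unital Banach algebra under the $\ell^1$ norm on coefficients (submultiplicativity via convolution is exactly the point to check), its maximal ideal space is identified with the circle by showing every character is a point evaluation (the argument $|\chi(e^{ix})|\le 1$ and $|\chi(e^{-ix})|\le 1$ forcing $|\chi(e^{ix})|=1$, followed by density of trigonometric polynomials, is the right one --- and you are right that the automatic continuity of characters is what licenses the density step), and then invertibility follows from the Gelfand criterion that an element is invertible iff it lies in no maximal ideal, which rests on Gelfand--Mazur. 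Your alternative sketch is essentially Wiener's original (and the paper's cited) argument: a compactness/localization proof with local Neumann-series inverses patched by a partition of unity in $A(\mathbb{T})$, whose crux is making $\lVert \psi g\rVert_A$ small by truncating the Fourier series of $g$ near a point where $g$ vanishes. The Gelfand proof is shorter and conceptually cleaner but needs the Banach-algebra machinery; Wiener's localization is elementary and constructive but requires the careful smallness estimate you flag. Either is a complete and acceptable proof of the lemma; the one cosmetic point is that the hypothesis ``$f(x)\neq 0$ for all $x\in\mathbb{R}$'' should be read as non-vanishing on the circle, which your setup handles implicitly since $f\in A(\mathbb{T})$ is $2\pi$-periodic and continuous.
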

Therefore, an application of the Riemann hypothesis would then imply the existence of an absolutely convergent Fourier series for $1/\zeta(\sigma+it),$ when $\frac{1}{2}<\sigma<1.$

1390 Bumps River Rd. \\*
Centerville, MA
02632 \\*
USA \\*
E-mail: alexpatk@hotmail.com, alexepatkowski@gmail.com

\end{document}